\numberwithin{equation}{section}
\numberwithin{figure}{section}
\numberwithin{table}{section}
\long\def\MSC#1\EndMSC{\def\arg{#1}\ifx\arg\empty\relax\else
     {\narrower\noindent%
{2010 Mathematics Subject Classification}: #1\\} \fi}
\long\def\PACS#1\EndPACS{\def\arg{#1}\ifx\arg\empty\relax\else
     {\narrower\noindent%
{PACS numbers}: #1}\fi}
\long\def\KEY#1\EndKEY{\def\arg{#1}\ifx\arg\empty\relax\else
	{\narrower\noindent% 
Keywords: #1\\}\fi}
\theoremstyle{plain}
\newtheorem{theorem}{Theorem}[section]
\newtheorem{lemma}[theorem]{Lemma}
\theoremstyle{definition}
\newtheorem{definition}[theorem]{Definition}
\newtheorem{assumption}[theorem]{Assumption}
\theoremstyle{remark}
\newtheorem{remark}[theorem]{Remark}
\newcommand{\abs}[1]{\lvert#1\rvert} 
\newcommand{\inner}[1]{\langle#1\rangle}
\newcommand{\dist}{\mathop{\textup{dist}}}
\newcommand{\essinf}{\mathop{\textup{ess\,inf}}}
\newcommand{\esssup}{\mathop{\textup{ess\,sup}}}
\newcommand{\rum}[1]{\mathbb{#1}}
\newcommand{\di}{\mathrm{d}}   % differential
\newcommand{\betaL}{\beta_\textup{L}}
\newcommand{\betaU}{\beta_\textup{U}}
\begin{document}

\title[Reconstruction of layered conductivities in EIT]{Reconstruction of piecewise constant layered conductivities in electrical impedance tomography}

\author[H.~Garde]{Henrik Garde}
\address[H.~Garde]{Department of Mathematical Sciences, Aalborg University, Skjernvej 4A, 9220 Aalborg, Denmark.}
\email{henrik@math.aau.dk}

\begin{abstract}
	This work presents a new \emph{constructive} uniqueness proof for Calder\'on's inverse problem of electrical impedance tomography, subject to local Cauchy data, for a large class of piecewise constant conductivities that we call \emph{piecewise constant layered conductivities} (PCLC). The resulting reconstruction method only relies on the physically intuitive monotonicity principles of the local Neumann-to-Dirichlet map, and therefore the method lends itself well to efficient numerical implementation and generalization to electrode models \cite{Garde_2019,GardeStaboulis_2016}. Several direct reconstruction methods exist for the related problem of inclusion detection, however they share the property that ``holes in inclusions" or ``inclusions-within-inclusions" cannot be determined. One such method is the monotonicity method of Harrach, Seo, and Ullrich \cite{Harrach10,Harrach13}, and in fact the method presented here is a modified variant of the monotonicity method which overcomes this problem. More precisely, the presented method abuses that a PCLC type conductivity can be decomposed into nested layers of positive and/or negative perturbations that, layer-by-layer, can be determined via the monotonicity method. The conductivity values on each layer are found via basic one-dimensional optimization problems constrained by monotonicity relations.   
\end{abstract}

\maketitle

\KEY
electrical impedance tomography, 
partial data reconstruction,
piecewise constant coefficient,
monotonicity principle.
\EndKEY

\MSC
35R30, % PDE: inverse problems
35Q60, % PDE: optics and electromagnetic theory
35R05, % PDE: discontinuous coefficient or data
47H05. % Operator theory: monotone operators and generalizations 
\EndMSC

\section{Introduction}

Let $\Omega\subset \rum{R}^d$, $d\geq 2$, be a bounded domain with piecewise $\mathscr{C}^\infty$-smooth boundary $\partial\Omega$ (without cusps), for which $\rum{R}^d\setminus\overline{\Omega}$ is connected. We denote by $\nu$ an outer unit normal on $\partial\Omega$, and $\Gamma\subseteq\partial\Omega$ is a non-empty relatively open subset whose role is to employ local Cauchy data. For an electrical conductivity coefficient 
\begin{equation*}
	\sigma \in L_+^\infty(\Omega) := \{ \varsigma \in L^\infty(\Omega;\rum{R}) \mid \essinf \varsigma > 0\}
\end{equation*}
and boundary current density 
\begin{equation*}
	f\in L^2_\diamond(\Gamma) := \{ g\in L^2(\Gamma) \mid \int_{\Gamma} g \, \di S = 0\} 
\end{equation*}
we consider the \emph{partial data conductivity problem}
\begin{equation} \label{eq:condeq}
	\nabla\cdot(\sigma\nabla u) = 0 \quad\text{in } \Omega, \qquad \nu\cdot \sigma\nabla u|_{\partial\Omega} = \begin{cases}
		f &\quad\text{on } \Gamma, \\
		0 &\quad\text{on } \partial\Omega\setminus\Gamma.
	\end{cases}
\end{equation}
From standard elliptic theory there is a unique solution $u = u_f^\sigma$ to \eqref{eq:condeq}, representing the interior electric potential, belonging to the ``$\Gamma$-mean free" Sobolev space
\begin{equation*}
	H^1_\diamond(\Omega) := \{ w\in H^1(\Omega) \mid \int_{\Gamma} w|_\Gamma \, \di S = 0\}.
\end{equation*}

This gives rise to a well-defined local Neumann-to-Dirichlet (ND) operator $\Lambda(\sigma) : f\mapsto u|_\Gamma$ which in this work is interpreted as a compact self-adjoint operator in $\mathscr{L}(L^2_\diamond(\Gamma))$, the space of bounded linear operators on $L^2_\diamond(\Gamma)$. 

The inverse problem of electrical impedance tomography (EIT), in the sense of Calder\'on's formulation \cite{Calderon1980}, is:
\begin{equation*}
	\textit{Reconstruct } \sigma \textit{ from knowledge of } \Lambda(\sigma).
\end{equation*}
In the practical setting, this corresponds to finding the conductivity coefficient in the interior of an object from indirect measurements of current--voltage pairs (injected current and measured voltage) recorded at electrodes placed on the object's surface. Hence, $\Lambda(\sigma)$ represents the ideal datum for such a problem. This paper will provide a new simple reconstruction method for recovering a large class of piecewise constant conductivities from their corresponding local ND map. However, first we review some known results on uniqueness and reconstruction in EIT. 

For full boundary data ($\Gamma = \partial\Omega$) unique recovery of $\sigma$ from $\Lambda(\sigma)$, i.e.\ injectivity of $\sigma\mapsto\Lambda(\sigma)$, has been solved in high generality. See e.g.~\cite{Astala2006a} for general $L^\infty_+(\Omega)$-conductivities in dimension two, and \cite{CaroRogers2016} for Lipschitz conductivities in dimension three and beyond. For full boundary data there are also reconstruction methods, based on the works of e.g.~\cite{Nachman1988a,Nachman1996,Brown1997}, such as the $\bar{\partial}$-method which has received much attention regarding theoretical development and practical implementation \cite{Siltanen2000,Knudsen2007,Knudsen2009,Cornean2006,Hamilton_2014,Siltanen_2014,Hyvonen_2018}. The motivation behind this paper stems from the expectation that, with enough restrictions on the considered class of conductivities, more straightforward and intuitive reconstruction methods will emerge. This expectation is supported by recent promising computational results in \cite{Beretta_2018}, based on shape optimization for piecewise constant conductivities on polygonal partitions.

For the different types of partial data problems in EIT (partial Dirichlet and/or Neumann data on various parts of the boundary) we refer to the review paper \cite{Kenig_2014} and the references therein. Here we will focus on local Cauchy data, in the sense of the local ND map defined above. The uniqueness problem is treated in \cite{Imanuvilov2010,Imanuvilov_2015} in two dimensions and for certain three-dimensional geometric shapes in \cite{Isakov2007,Kenig_2013}. Although for piecewise analytic conductivities the uniqueness result holds in all reasonable geometric shapes via \cite{Kohn1985,Harrach_2019}. Even when uniqueness holds for the partial data problem, exact reconstruction methods are scarce. In fact to the author's knowledge, the only other proven reconstruction method (besides the one given in this paper) is found in \cite{Nachman2010} which does not apply to local Cauchy data, but requires Dirichlet and Neumann data to be applied on a (slightly overlapping) partition of $\partial\Omega$.

We refer to the review papers \cite{Borcea2002a,Borcea2002,Cheney1999,Uhlmann2009} and references therein for more information on the theoretical and practical aspects of EIT, and refer to the list of references in section~\ref{sec:notationandlemmas} on the related problem of inclusion detection.

\begin{figure}[htb]
	\centering
	\includegraphics[width=\textwidth]{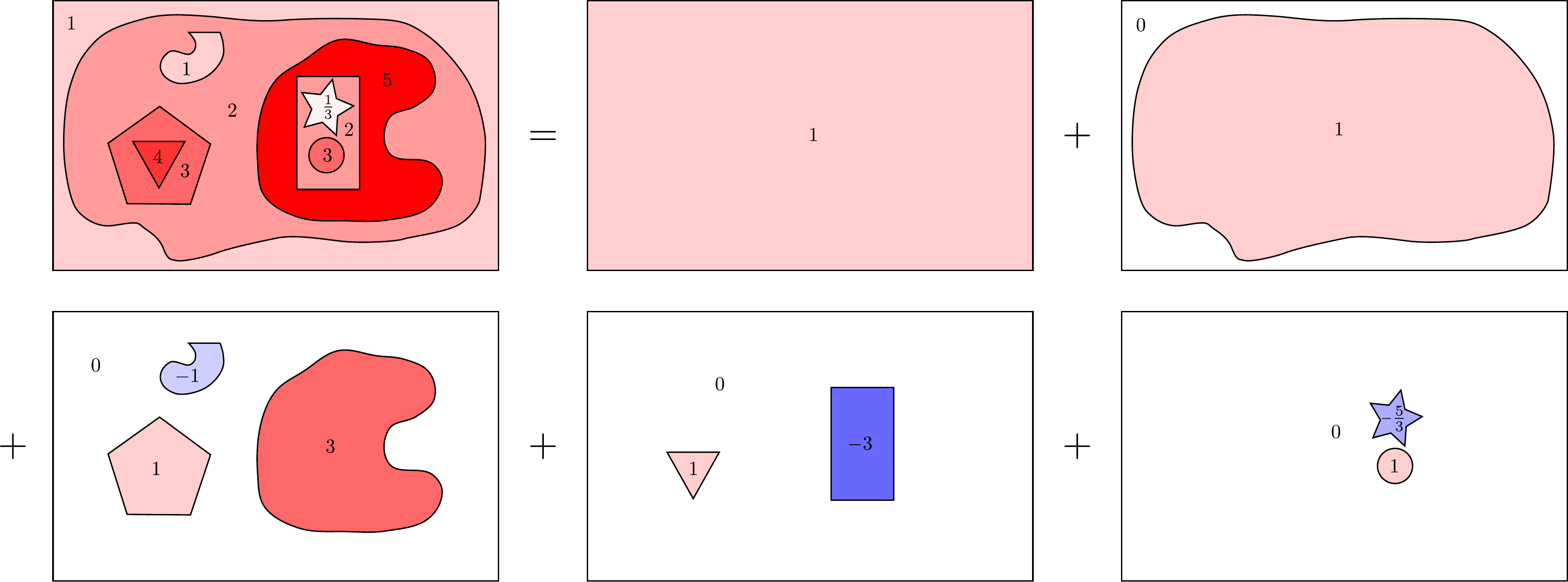}
	\caption{Decomposition of a PCLC type conductivity (top left) into each of its layers. The numbers represent function values in each of the colored regions.}
	\label{fig:fig1}
\end{figure}

In this paper, we will consider a class of piecewise constant conductivity coefficients that can be decomposed into a sum of piecewise constant functions on nested sets (layers) with connected complement. We call such a conductivity coefficient of type \emph{piecewise constant layered conductivity} (PCLC), formally defined in Definition~\ref{def:pclc} in section~\ref{sec:setting}. As illustrated by the example in Figure~\ref{fig:fig1}, this type of decomposition is in fact possible for many piecewise constant functions. The purpose of this paper is to provide a reconstruction method, based on a short and comparatively non-technical proof, that determines any PCLC type conductivity $\gamma$ from its local ND map $\Lambda(\gamma)$ via the monotonicity principles of $\sigma\mapsto\Lambda(\sigma)$. 

It is noted that \cite{Alessandrini2018} have used similar ideas with piecewise constant coefficients on layered sets. Their contribution is a uniqueness proof for the complicated case of \emph{anisotropic} piecewise constant coefficients on layered sets. The result of \cite{Alessandrini2018} is non-constructive, each layer consists of a single connected component, and they need stronger assumptions on the boundaries of the layers. Hence the result of \cite{Alessandrini2018} differ considerably from the results presented here, where the main contribution is a \emph{constructive} proof from partial boundary data. 

The remainder of the paper is organized as follows. Section~\ref{sec:setting} introduces the main assumptions and the PCLC coefficients that can be reconstructed. Section~\ref{sec:notationandlemmas} introduces some additional notation and mention two lemmas on the monotonicity principles of $\sigma\mapsto \Lambda(\sigma)$ and on localizing solutions to \eqref{eq:condeq}, that will be used for proving the main results. The main results Theorem~\ref{thm:findsupp} and Theorem~\ref{thm:findconst} are stated and proved in section~\ref{sec:findsupp} and section~\ref{sec:findconst}, respectively. Section~\ref{sec:monorecon} summarizes the actual reconstruction method based on Theorem~\ref{thm:findsupp} and Theorem~\ref{thm:findconst}. Finally, section~\ref{sec:simpleexample} is dedicated to illustrating that the method becomes quite straightforward if each layer only consists of a single connected component.

\section{The setting} \label{sec:setting}

Before giving a precise definition of PCLC type conductivities, we will start by defining the (closed) $\tau$-thinning and the outer $\tau$-layer of a set $E\subseteq \rum{R}^d$ as
\begin{align}
	H_\tau(E) &:= \{ x\in E \mid \dist(x,\partial E) \geq \tau \}, \label{eq:tauthinning}\\
	F_\tau(E) &:= \{ x\in E \mid \dist(x,\partial E) < \tau \}. \label{eq:taulayer}
\end{align}
We now state a list of assumptions on a family of sets that will be used to represent layers of a conductivity coefficient.
\begin{assumption} \label{assump}
Let $\tau > 0$, $N\in\rum{N}$, and $\{D_j\}_{j=1}^N$ be sets in $\mathbb{R}^d$ satisfying:
\begin{enumerate}[(i)]
	\item $D_j$ is the closure of a non-empty open set with piecewise $\mathscr{C}^\infty$-smooth boundary.
	\item $D_j$ has connected complement $\rum{R}^d\setminus D_j$.
	\item $D_{j+1} \subseteq H_\tau(D_j)$ for $j = 1,\dots,N-1$ and $D_1\subset \Omega$.
	\item Each set $D_j$ consists of finitely many connected components $\{D_{j,n}\}_{n=1}^{N_j}$.
\end{enumerate} 
\end{assumption}
Before continuing, we give a few remarks on these assumptions.
\begin{remark}[Related to Assumption~\ref{assump}] \label{remark:assump} {}\
	\begin{enumerate}[(1)]
		\item While we do not allow cusps on $\partial\Omega$, there can be cusps on $\partial D_j$. This is because piecewise analytic functions allow cusps on interior interfaces~\cite[Section~3]{Kohn1985}.
		\item The case $\Gamma = \partial\Omega$ allows $D_1\subseteq \overline{\Omega}$ with only minor modifications to the proof of Theorem~\ref{thm:findsupp}.
		\item Using $D_j$ as the closure of an open set, compared to a more general closed set, has the following immediate advantage: $B\cap D_j$ contains a non-empty open set for every open neighborhood $B$ of $x\in D_j$. This avoids some obvious pathological cases in the proof of Theorem~\ref{thm:findsupp}.
		\item Each connected component $D_{j,n}$ obviously also satisfies (i) and (ii) of Assumption~\ref{assump} and $\dist(\partial D_{j,n},D_{j+1})\geq\tau$.
		\item We will refer to $\tau>0$ as the \emph{minimal thickness} related to $\{D_j\}_{j=1}^N$.
		\item The layering of the sets and $\tau>0$ is required for the proofs to be \emph{constructive}. Much milder conditions apply when obtaining non-constructive uniqueness and stability proofs via monotonicity-based arguments~\cite{Harrach_2019}.
	\end{enumerate}
\end{remark} 

For a set $E\subseteq\rum{R}^d$ let $\chi_E$ denote the characteristic function on $E$. We now define the PCLC type conductivities.
\begin{definition} \label{def:pclc}
	Suppose $\{D_j\}_{j=1}^N$ satisfy Assumption~\ref{assump} with minimal thickness $\tau>0$, then we call $\gamma$ a \emph{piecewise constant layered conductivity} (PCLC), provided that
	\begin{equation*}
		\gamma = c_0 + \sum_{j=1}^N\sum_{n=1}^{N_j} c_{j,n} \chi_{D_{j,n}}
	\end{equation*}
	where $c_0>0$ and $c_{j,n} \in\rum{R}\setminus\{0\}$ satisfy $0 < \betaL \leq \gamma \leq \betaU$ in $\Omega$ for scalars $\betaL$ and $\betaU$. Here $D_j$ is called the $j$'th layer of $\gamma$, with $D_0 := \overline{\Omega}$ denoting the $0$'th layer.
\end{definition}

For $k\in \{0,1,\dots,N\}$ we define the $k$'th layer-truncated conductivity:
\begin{equation}
\gamma_k := c_0 + \sum_{j=1}^k\sum_{n=1}^{N_j} c_{j,n} \chi_{D_{j,n}}, \label{eq:gammak}
\end{equation}
where in particular $\gamma = \gamma_N$. Note that Assumption~\ref{assump} implies that $\gamma_k$ is piecewise analytic (see e.g.~\cite[Definition~2.1]{Harrach13} and \cite[Section 3]{Kohn1985}). In the following we will devise an iterative reconstruction method that at its $k$'th iteration exactly reconstructs $\gamma_k$, and naturally terminates at $k = N$. Purely from a notational point of view, in the following sections we will use $D_{N+1} := \emptyset$, which naturally is the conclusion from the $(N+1)$'th iteration. 

To summarize the ideas behind the proofs of the main results, consider the problem of determining $\gamma_{k+1}$ from $\gamma_k$ and $\Lambda(\gamma)$. This consists of two parts related to the results of Theorem~\ref{thm:findsupp} and Theorem~\ref{thm:findconst}:
\begin{enumerate}[(i)]
	\item First we find the set $D_{k+1}$. In fact, we find the components of $D_{k+1}$ inside each of the components $D_{k,n_0}$ separately.
	\item Afterwards we determine the constants $c_{k+1,m_0}$, related to each component $D_{k+1,m_0}$.
\end{enumerate} 

Part (i) focuses on reconstructing the components of $D_{k+1}$ that reside inside a component $D_{k,n_0}$. By finding certain upper bounds $D_{k+1}\cap D_{k,n_0}\subseteq C$, we may subsequently shrink $C$ until we exactly capture the set $D_{k+1}\cap D_{k,n_0}$. The monotonicity principles of $\sigma\mapsto \Lambda(\sigma)$, combined with a localization result, characterize when $C$ is an upper bound. This is done by explicitly constructing two families of operators $T_{k,n_0}^+(C)$ and $T_{k,n_0}^-(C)$, only based on $\gamma_k$ and $\Lambda(\gamma)$, such that $D_{k+1}\cap D_{k,n_0}\subseteq C$ if and only if both $T_{k,n_0}^+(C)$ and $T_{k,n_0}^-(C)$ are positive semi-definite. $D_{k+1}\cap D_{k,n_0}$ can comprise several connected components, some related to positive parts of $\gamma_{k+1}-\gamma_k$ and others related to negative parts. Therefore we need both operators $T_{k,n_0}^+(C)$ (handles positive parts) and $T_{k,n_0}^-(C)$ (handles negative parts) in order to find $D_{k+1}\cap D_{k,n_0}$. The construction of $T_{k,n_0}^+(C)$ and $T_{k,n_0}^-(C)$ is such that only the components of $D_{k+1}$ inside $D_{k,n_0}$ influence the positive semi-definiteness, i.e.\ other components of $D_{k+1}$ can be marginalized in this regard.

The ideas of part (ii) are actually very similar to those of part (i). Now we focus on a single component $D_{k+1,m_0}$, and construct two new families of operators $S_{k,m_0}^+(s)$ and $S_{k,m_0}^-(t)$, only based on $\gamma_k$, $\Lambda(\gamma)$, and $D_{k+1}$. The two families of operators are characterized by considering either a positive or negative perturbation to $\gamma_k$ on the outer $\tau$-layer of $D_{k+1,m_0}$ (hence the need for the $\tau$-thickness between the layers), while simultaneously marginalizing other components of $D_{k+1}$ and the $\tau$-thinned part of $D_{k+1,m_0}$ when it comes to positive semi-definiteness of $S_{k,m_0}^+(s)$ and $S_{k,m_0}^-(t)$. Monotonicity principles of $\sigma\mapsto\Lambda(\sigma)$ can first determine the sign of $c_{k+1,m_0}$, and afterwards find its value via a one-dimensional optimization problem constrained by positive semi-definiteness of either $S_{k,m_0}^+(s)$ (for positive sign) or $S_{k,m_0}^-(t)$ (for negative sign).

For this reconstruction method the following is assumed known/unknown a priori:
\begin{itemize}
	\item The following are assumed to be \emph{known} a priori: $\Omega$, $\Gamma$, $\Lambda(\gamma)$, $c_0$, and $\gamma$ is of type PCLC with known lower and upper bounds $\betaL$ and $\betaU$ and minimal thickness $\tau$.
	\item The following are \emph{unknown} a priori: $c_{j,n}$, $D_{j,n}$, $N_j$, and $N$. 
\end{itemize}
\begin{remark}
	Here we assume $c_0$ is known a priori. Such an assumption is also often imposed on other reconstruction methods such as the $\bar{\partial}$-method, which can be circumvented by first applying another method to reconstruct $\gamma$ on $\Gamma$, see e.g.~\cite{Nakamura2001a}. 
\end{remark}

\section{Notational remarks and lemmas} \label{sec:notationandlemmas}

For brevity we denote the essential infimum/supremum $\essinf \varsigma$ and $\esssup \varsigma$ of a function $\varsigma\in L^\infty(\Omega;\rum{R})$ by $\inf(\varsigma)$ and $\sup(\varsigma)$, respectively. $\inner{\cdot,\cdot}$ will always denote the usual $L^2(\Gamma)$-inner product.

Let $\mathscr{L}(X,Y)$ be the space of bounded linear operators between Banach spaces $X$ and $Y$, with the shorthand notation $\mathscr{L}(X) := \mathscr{L}(X,X)$. For a self-adjoint operator $T\in \mathscr{L}(L^2_\diamond(\Gamma))$ then $T\geq 0$ denotes that $T$ is a positive semi-definite operator, i.e.\ $\inner{Tf,f}\geq 0$ for all $f\in L^2_\diamond(\Gamma)$.

We will often use the symbols ``$+$"/``$-$" to associate sets and operators to positive/negative perturbations. To avoid excessive repetition, ``$\pm$" will indicate that a statement holds for both the ``$+$" and ``$-$" version of the set/operator. For example, $\smash{T_{k,n_0}^{\pm} \geq 0}$ means that both $\smash{T_{k,n_0}^+ \geq 0}$ and $\smash{T_{k,n_0}^- \geq 0}$ hold true.  

The reconstruction method will be derived based on the following two results, the \emph{monotonicity principle} and \emph{localized potentials} (which is related to the Runge approximation property), both of which are well-known results for monotonicity-based reconstruction of the support of perturbations (inclusion detection) and for non-constructive uniqueness and stability proofs in EIT, cf.~e.g.~\cite{Kang1997b,Ikehata1998a,Tamburrino2002,Gebauer2008b,Harrach10,Harrach13,Harrach15,Harrach_2019,CandianiDardeGardeHyvonen2019,GardeStaboulis_2016,Garde_2019,Garde_2017a}. 
\begin{lemma}[Monotonicity principle] \label{lemma:mono}
	For $f\in L^2_\diamond(\Gamma)$ and $\sigma_1, \sigma_2 \in L^\infty_+(\Omega)$, it holds
	\begin{equation*}
	\int_{\Omega} \frac{\sigma_2}{\sigma_1}(\sigma_1-\sigma_2) \abs{\nabla u_f^{\sigma_2}}^2\, \di x \leq \inner{(\Lambda(\sigma_2) - \Lambda(\sigma_1))f,f} \leq \int_{\Omega} (\sigma_1-\sigma_2)\abs{\nabla u_f^{\sigma_2}}^2 \, \di x.
	\end{equation*}
\end{lemma}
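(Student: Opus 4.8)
The plan is to derive both inequalities from the weak formulation of the conductivity equation \eqref{eq:condeq}, exploiting the variational (energy minimization) characterization of the ND map. First I would record that for any $f \in L^2_\diamond(\Gamma)$ and $\sigma \in L^\infty_+(\Omega)$, the bilinear form $B_\sigma(u,v) := \int_\Omega \sigma \nabla u \cdot \nabla v \, \di x$ satisfies $B_\sigma(u_f^\sigma, w) = \inner{f, w|_\Gamma}$ for all $w \in H^1_\diamond(\Omega)$, and in particular that the quadratic form of the ND map is an energy: $\inner{\Lambda(\sigma)f, f} = \inner{f, u_f^\sigma|_\Gamma} = B_\sigma(u_f^\sigma, u_f^\sigma) = \int_\Omega \sigma |\nabla u_f^\sigma|^2 \, \di x$. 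I would also note the dual/minimization principle, namely that $u_f^\sigma$ minimizes $v \mapsto \int_\Omega \sigma |\nabla v|^2 \, \di x$ over the affine set of admissible potentials with the prescribed Neumann data, so that $\inner{\Lambda(\sigma)f,f}$ equals this minimum.

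The key algebraic step is a pointwise elementary inequality. For the upper bound, I would use the minimization property of $u_f^{\sigma_1}$ for the $\sigma_1$-energy by inserting the competitor $u_f^{\sigma_2}$, giving
\begin{equation*}
\inner{\Lambda(\sigma_1)f,f} = \int_\Omega \sigma_1 |\nabla u_f^{\sigma_1}|^2 \, \di x \leq \int_\Omega \sigma_1 |\nabla u_f^{\sigma_2}|^2 \, \di x,
\end{equation*}
and then subtract $\inner{\Lambda(\sigma_2)f,f} = \int_\Omega \sigma_2 |\nabla u_f^{\sigma_2}|^2 \, \di x$ to obtain exactly $\inner{(\Lambda(\sigma_2)-\Lambda(\sigma_1))f,f} \leq \int_\Omega (\sigma_1 - \sigma_2)|\nabla u_f^{\sigma_2}|^2\, \di x$. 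The lower bound is the subtler half: I would derive it from the pointwise inequality $\sigma_1 |a|^2 - \sigma_2 |b|^2 \geq \tfrac{\sigma_2}{\sigma_1}(\sigma_1-\sigma_2)|b|^2 + 2\sigma_2 b \cdot (a - b)$ applied with $a = \nabla u_f^{\sigma_1}$ and $b = \nabla u_f^{\sigma_2}$ (this follows by completing the square in the convex function $a \mapsto \sigma_1 |a|^2$ and is the standard trick behind the "improved" lower monotonicity estimate). Integrating and using that the cross term $\int_\Omega \sigma_2 \nabla u_f^{\sigma_2} \cdot \nabla(u_f^{\sigma_1} - u_f^{\sigma_2})\, \di x$ vanishes by the weak formulation for $\sigma_2$ with test function $w = u_f^{\sigma_1} - u_f^{\sigma_2} \in H^1_\diamond(\Omega)$, the cross term drops out and I recover $\inner{(\Lambda(\sigma_2)-\Lambda(\sigma_1))f,f} \geq \int_\Omega \tfrac{\sigma_2}{\sigma_1}(\sigma_1-\sigma_2)|\nabla u_f^{\sigma_2}|^2 \, \di x$.

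The main obstacle is getting the precise form of the lower bound with the $\sigma_2/\sigma_1$ prefactor right, rather than the cruder $\int_\Omega(\sigma_1-\sigma_2)|\nabla u_f^{\sigma_1}|^2$ estimate; the whole point is to have both bounds expressed in terms of the \emph{same} potential $u_f^{\sigma_2}$, which is what the completing-the-square identity accomplishes. Concretely I would verify the pointwise inequality by writing $\sigma_1|a|^2 = \sigma_1|a - b|^2 + 2\sigma_1 b\cdot(a-b) + \sigma_1|b|^2 \geq 2\sigma_1 b\cdot(a-b) + \sigma_1|b|^2$, and then rearranging against $\sigma_2|b|^2$; care is needed because $\sigma_1 - \sigma_2$ need not have a sign, so I must keep the estimate as an identity-plus-nonnegative-remainder rather than discarding terms prematurely. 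Once the cross term is shown to vanish via the correct weak formulation (and here one must check that $u_f^{\sigma_1} - u_f^{\sigma_2}$ is a legitimate test function in $H^1_\diamond(\Omega)$, which holds since both potentials lie in that space), both inequalities follow; all the analytic regularity needed is already guaranteed by the standard elliptic well-posedness quoted after \eqref{eq:condeq}.
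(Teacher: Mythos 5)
Both halves of your argument contain a genuine error, stemming from conflating the Neumann (ND) setting with the Dirichlet (DN) setting. For the upper bound, the minimization principle you invoke is not valid: the Neumann solution $u_f^{\sigma_1}$ does not minimize the pure energy $v\mapsto\int_\Omega\sigma_1\abs{\nabla v}^2\,\di x$, and there is no sensible affine set of $H^1$-potentials ``with prescribed Neumann data'' over which to minimize (a Neumann constraint on an arbitrary $H^1$ function is not even well defined). Concretely, the inequality $\int_\Omega\sigma_1\abs{\nabla u_f^{\sigma_1}}^2\,\di x\le\int_\Omega\sigma_1\abs{\nabla u_f^{\sigma_2}}^2\,\di x$ is false in general: for $\sigma_2=2\sigma_1$ one has $u_f^{\sigma_2}=\tfrac12 u_f^{\sigma_1}$, so the right-hand side is one quarter of the left-hand side. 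Moreover, even granting that inequality, subtracting $\inner{\Lambda(\sigma_2)f,f}$ from both sides yields a bound on $\inner{(\Lambda(\sigma_1)-\Lambda(\sigma_2))f,f}$ --- the operators in the wrong order --- not on $\inner{(\Lambda(\sigma_2)-\Lambda(\sigma_1))f,f}$ as the lemma requires; an upper bound for the latter needs a \emph{lower} bound on $\inner{\Lambda(\sigma_1)f,f}$, not an upper bound. The correct route is to minimize the full functional $J_1(v):=\int_\Omega\sigma_1\abs{\nabla v}^2\,\di x-2\inner{f,v|_\Gamma}$ over $H^1_\diamond(\Omega)$ with competitor $v=u_f^{\sigma_2}$: this gives $-\inner{\Lambda(\sigma_1)f,f}=J_1(u_f^{\sigma_1})\le J_1(u_f^{\sigma_2})=\int_\Omega\sigma_1\abs{\nabla u_f^{\sigma_2}}^2\,\di x-2\inner{\Lambda(\sigma_2)f,f}$, which rearranges exactly to the claimed upper bound.

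For the lower bound, your pointwise inequality is correct (it is equivalent to $\sigma_1\abs{a-\tfrac{\sigma_2}{\sigma_1}b}^2\ge0$), but the cross term does \emph{not} vanish: the weak formulation here reads $\int_\Omega\sigma_2\nabla u_f^{\sigma_2}\cdot\nabla w\,\di x=\inner{f,w|_\Gamma}$ for $w\in H^1_\diamond(\Omega)$, so with $w=u_f^{\sigma_1}-u_f^{\sigma_2}$ the cross term equals $\inner{(\Lambda(\sigma_1)-\Lambda(\sigma_2))f,f}$, which is nonzero in general. (It vanishes in the DN-map version of this argument, where the two potentials share Dirichlet data and their difference lies in $H^1_0(\Omega)$ --- that appears to be what you are remembering.) This is not a cosmetic slip: if the cross term truly vanished, your steps would prove $\inner{(\Lambda(\sigma_1)-\Lambda(\sigma_2))f,f}\ge\int_\Omega\tfrac{\sigma_2}{\sigma_1}(\sigma_1-\sigma_2)\abs{\nabla u_f^{\sigma_2}}^2\,\di x$, which is false (take $\sigma_1>\sigma_2$ pointwise: the right-hand side is positive for $f\neq0$ while the left-hand side is $\le0$ by monotonicity). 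The repair is to keep the cross term: integrating your pointwise inequality gives $\inner{(\Lambda(\sigma_1)-\Lambda(\sigma_2))f,f}\ge\int_\Omega\tfrac{\sigma_2}{\sigma_1}(\sigma_1-\sigma_2)\abs{\nabla u_f^{\sigma_2}}^2\,\di x+2\inner{(\Lambda(\sigma_1)-\Lambda(\sigma_2))f,f}$, and moving the last term to the left flips the sign of the operator difference and yields precisely the stated lower bound. For context, the paper does not prove this lemma itself but cites \cite{Harrach10,Harrach13}; your variational strategy is indeed the standard one behind those proofs, but as written both steps rest on false identities and need the corrections above.
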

\begin{proof}
	This type of result goes back to \cite{Kang1997b,Ikehata1998a}. See \cite[Lemma~3.1]{Harrach13} or \cite[Lemma~2.1]{Harrach10} for a proof of this version of the result, that is readily modified to the local ND map using the variational form of \eqref{eq:condeq}. See also \cite[Section~4.3]{Harrach13} for remarks on such extensions.
\end{proof}
\begin{lemma}[Localized potentials] \label{lemma:locpot}
	Let $U\subset \overline{\Omega}$ be a relatively open connected set, which intersects~$\Gamma$, and has connected complement. Let $B\subset U$ be an open non-empty set and $\sigma\in L^\infty_+(\Omega)$ piecewise analytic, then there are sequences $(f_i)\subset L^2_\diamond(\Gamma)$ and $(u_i)\subset H^1_\diamond(\Omega)$ with $u_i = u_{f_i}^\sigma$ satisfying
	\begin{equation}
	\lim_{i\to\infty} \int_B \abs{\nabla u_i}^2\,\di x = \infty \qquad \text{and} \qquad \lim_{i\to\infty}\int_{\Omega\setminus U} \abs{\nabla u_i}^2\,\di x = 0. \label{eq:locpot}
	\end{equation}
\end{lemma}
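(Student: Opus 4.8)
The plan is to prove the localized potentials result by a functional-analytic duality/contradiction argument, which is the standard technique for such statements (going back to Gebauer and refined by Harrach and collaborators). The key object is the linear ``measurement-type'' operator that sends a boundary current to the (gradient of the) solution restricted to a region, and the heart of the matter is a range/annihilator characterization.

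\medskip

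\noindent\textbf{Setup via two operators.} First I would reformulate the two limits in \eqref{eq:locpot} as a statement about simultaneous control of two seminorms. For a relatively open set $W\subseteq\overline{\Omega}$ define the bounded linear \emph{virtual measurement operator} $L_W : L^2_\diamond(\Gamma)\to L^2(W)^d$, $f\mapsto \nabla u_f^\sigma|_W$, where $u_f^\sigma$ solves \eqref{eq:condeq}. The two requirements in \eqref{eq:locpot} are exactly that there is a sequence $(f_i)$ with $\norm{L_B f_i}\to\infty$ while $\norm{L_{\Omega\setminus U} f_i}\to 0$. By a standard functional-analytic lemma (cf.\ \cite[Lemma~2.5]{Gebauer2008b} or the equivalent statement in \cite{Harrach13}), such a sequence exists if and only if the ranges satisfy the non-inclusion
\begin{equation*}
	\mathcal{R}(L_B^*) \not\subseteq \mathcal{R}(L_{\Omega\setminus U}^*),
\end{equation*}
equivalently, $L_B$ is not ``dominated'' by $L_{\Omega\setminus U}$ in the sense that no estimate $\norm{L_B f}\leq C\norm{L_{\Omega\setminus U}f}$ holds. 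So the whole lemma reduces to establishing this range non-inclusion.

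\medskip

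\noindent\textbf{Reduction to a unique continuation statement.} Next I would compute the adjoints. A duality computation shows that $L_W^* g$, for a vector field $g\in L^2(W)^d$, is the local Neumann trace on $\Gamma$ of the solution $w$ to the adjoint problem $\nabla\cdot(\sigma\nabla w) = -\nabla\cdot(\chi_W g)$ in $\Omega$ with homogeneous Neumann data on $\partial\Omega\setminus\Gamma$ (interpreting $\chi_W g$ as an $L^2$ source extended by zero). The range inclusion $\mathcal{R}(L_B^*)\subseteq\mathcal{R}(L_{\Omega\setminus U}^*)$ would then force: every boundary trace producible by a source supported in $B$ is also producible by a source supported in $\Omega\setminus U$. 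Since $B\subset U$ and $\Omega\setminus U$ are separated, I would argue by contradiction that this contradicts unique continuation for the elliptic operator $\nabla\cdot(\sigma\nabla\,\cdot)$. Concretely, if the inclusion held, one could build a nonzero solution of $\nabla\cdot(\sigma\nabla\phi)=0$ in the connected region $U$ (using that $U$ has connected complement and meets $\Gamma$) whose Cauchy data vanish on an open part of $\Gamma$, forcing $\phi\equiv 0$ and yielding the desired contradiction. This is where piecewise analyticity of $\sigma$ enters: it guarantees the unique continuation principle (UCP) across the analytic pieces, via \cite{Kohn1985} as already noted in the excerpt.

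\medskip

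\noindent\textbf{Main obstacle and how I would handle it.} The hard part is the unique-continuation/geometry step, i.e.\ rigorously deriving the range non-inclusion from UCP in the partial-data, piecewise-analytic setting. Two technical points need care. First, the connectivity and trace hypotheses on $U$ (connected, meets $\Gamma$, connected complement) must be used precisely to propagate vanishing Cauchy data from $\Gamma$ through $\Omega\setminus U$ and into $U$ without obstruction; the connected-complement assumption prevents the solution from being ``trapped'' away from the boundary. Second, UCP for $\nabla\cdot(\sigma\nabla\,\cdot)$ with merely piecewise analytic (discontinuous) $\sigma$ is subtler than the smooth case, so I would invoke the piecewise-analytic UCP results of \cite{Kohn1985} (and the framework of \cite{Harrach13,Harrach_2019}) rather than reprove them. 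Since this lemma is explicitly described in the excerpt as a ``well-known result,'' I expect the cleanest route is to cite the existing localized-potentials machinery — essentially \cite[Theorem~2.7]{Gebauer2008b} and its local-data extension in \cite{Harrach13} — and indicate that the present hypotheses on $U$, $B$, and $\sigma$ are precisely those under which that machinery applies, adapting the functional-analytic duality lemma to the local ND operator $\Lambda(\sigma)$ just as was done for the monotonicity principle in Lemma~\ref{lemma:mono}.
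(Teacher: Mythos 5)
Your proposal is correct and takes essentially the same route as the paper: the paper's proof of Lemma~\ref{lemma:locpot} consists precisely of citing the localized-potentials machinery of \cite{Gebauer2008b} and its local-data extensions in \cite{Harrach13,Harrach_2019}, and your sketch (norm blow-up reformulated as range non-inclusion of adjoints, adjoints computed as boundary traces of interior-source problems, unique continuation for piecewise-analytic $\sigma$) is exactly the internal structure of those cited proofs, and you conclude with the same citation. One small correction: $L_W^*g$ is the \emph{Dirichlet} trace $w|_\Gamma$ (suitably mean-corrected) of the solution of the Neumann source problem, not its Neumann trace, which vanishes on $\Gamma$ by the boundary condition.
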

\begin{proof}
	This result and its generalizations, ultimately based on unique continuation, is the main topic of \cite{Gebauer2008b}. Furthermore, this result is a special case of \cite[Theorem~3.6 and Section~4.3]{Harrach13}, which is also stated for locally supported Neumann conditions in \cite[Lemma~2.7]{Harrach_2019}.
\end{proof} 
It is also expected that other inclusion detection methods, such as the factorization method \cite{Bruhl2001,Bruhl2000,Kirsch2008,Hanke2015,Harrach13b,Gebauer2007} or the enclosure method \cite{Ikehata1999a,Ikehata2000c,Brander_2015}, can lead to similar reconstruction methods under stronger assumptions on the constants $c_{j,n}$ and sets $D_j$.

The map $\sigma \mapsto\Lambda(\sigma)$ is nonlinear, however it is Fr\'echet differentiable with derivative $D\Lambda(\sigma;\,\cdot\,)\in\mathscr{L}(L^\infty(\Omega;\rum{R}),\mathscr{L}(L^2_\diamond(\Gamma)))$; in fact the map is analytic~\cite[Appendix~A]{GardeHyvonenKuutela2020}. For each $\sigma\in L^\infty_+(\Omega)$, $\eta\in L^\infty(\Omega)$, and $f\in L^2_\diamond(\Gamma)$ then $D\Lambda(\sigma;\eta)$ is compact, self-adjoint, and satisfies the well-known quadratic formula (cf.~e.g.~\cite[Lemma~2.5]{Harrach_2019})
\begin{equation}
\inner{D\Lambda(\sigma;\eta)f,f} = -\int_{\Omega} \eta\abs{\nabla u_f^\sigma}^2\,\di x. \label{eq:lambdaderiv}
\end{equation}
While we could completely avoid $D\Lambda$ in this work by changing the conductivities used for the monotonicity principles, $D\Lambda$ does lead to a fast numerical method that may be of much higher practical value, without lengthening any of the proofs.

As additional notation, we define the index sets $I_j := \{1,\dots,N_j\}$ for $j\in\{1,\dots,N\}$ and $I_0 := \{1\}$ as $D_{0,1} = D_0 := \overline{\Omega}$. Moreover, 
\begin{align*}
I_j^+ &:= \{ n\in I_j \mid c_{j,n} > 0 \}, \qquad D_j^+ := \cup_{n\in I_j^+} D_{j,n}, \\
I_j^- &:= \{ n\in I_j \mid c_{j,n} < 0 \}, \qquad D_j^- := \cup_{n\in I_j^-} D_{j,n},
\end{align*}
such that $D_j = D_j^+\cup D_j^-$ decomposes the set into parts with only positive and only negative perturbations, respectively.

Since each connected component $D_{j,n_0}$ of $D_j$ can contain several connected components of $D_{j+1}$, it can swiftly become notationally demanding to have a hierarchical structure of such sets. For this reason we define a function $\mathfrak{n}_j : I_{j+1}\to I_j$, $m\mapsto n$, where $n\in I_j$ is the unique integer such that $D_{j+1,m}\subset D_{j,n}$ for given $j\in\{0,\dots,N-1\}$ and $m\in I_{j+1}$.

From this point onwards it is assumed $\gamma_k$ is known for some $k\in \{0,\dots,N-1\}$ and we will obtain results that determine $\gamma_{k+1}$. Denoting the constants
\begin{equation*}
	\alpha_{k,n} := \gamma_k|_{D_{k,n}} \quad n\in I_k, \qquad \hat{\alpha}_{k,m} := \alpha_{k,\mathfrak{n}_k(m)} \quad m\in I_{k+1},
\end{equation*}
these constants will be used to define \emph{conservative} upper bounds on the possible perturbations inside the connected components of $D_k$. Thereby we avoid having to consider the actual conductivity value on all connected components simultaneously when applying the monotonicity principles. Due to Definition~\ref{def:pclc} and Assumption~\ref{assump} it clearly holds that $\betaL\leq \alpha_{k,n} \leq \betaU$ for all $n\in I_k$. Moreover, from \eqref{eq:gammak}, Definition~\ref{def:pclc}, and Assumption~\ref{assump}(iii) we obtain the following bounds for any $n_0\in I_k$:
\begin{align}
	\gamma-\gamma_k &\leq \enskip\smashoperator{\sum_{m\in I_{k+1}}} (\betaU-\hat{\alpha}_{k,m})\chi_{D_{k+1,m}} \leq \enskip\smashoperator{\sum_{n\in I_k\setminus\{n_0\}}} (\betaU-\alpha_{k,n})\chi_{D_{k,n}} + (\betaU - \alpha_{k,n_0})\chi_{D_{k+1}\cap D_{k,n_0}}, \label{eq:gammadiffup} \\
	\gamma-\gamma_k &\geq \enskip\smashoperator{\sum_{m\in I_{k+1}}} (\betaL-\hat{\alpha}_{k,m})\chi_{D_{k+1,m}} \geq \enskip\smashoperator{\sum_{n\in I_k\setminus\{n_0\}}} (\betaL-\alpha_{k,n})\chi_{D_{k,n}} + (\betaL - \alpha_{k,n_0})\chi_{D_{k+1}\cap D_{k,n_0}}. \label{eq:gammadifflow}
\end{align}
In particular, $\betaL-\alpha_{k,n}$ represents the largest possible (signed) negative perturbation that can occur within $D_{k,n}$ when determining $\gamma_{k+1}$ from $\gamma_k$, and likewise $\betaU-\alpha_{k,n}$ is the largest possible positive perturbation.

\section{Reconstruction of \texorpdfstring{$D_{k+1}$ from $\gamma_k$ and $\Lambda(\gamma)$}{next layer}} \label{sec:findsupp}

For $n_0\in I_k$ and measurable $C\subseteq \overline{\Omega}$ we now define some operators based on $\gamma_k$ and $\Lambda(\gamma)$:
\begin{align*}
	T_{k,n_0}^+(C) &:= \Lambda(\gamma) - \Lambda(\gamma_k) - \smashoperator{\sum_{n\in I_k\setminus\{n_0\}}}(\betaU - \alpha_{k,n})D\Lambda(\gamma_k; \chi_{D_{k,n}}) - (\betaU-\alpha_{k,n_0})D\Lambda(\gamma_k; \chi_{C}), \\
	T_{k,n_0}^-(C) &:= \Lambda(\gamma_k) - \Lambda(\gamma) + \smashoperator{\sum_{\substack{\phantom{x}\\{n\in I_k\setminus\{n_0\}}}}} \frac{\alpha_{k,n}}{\betaL}(\betaL - \alpha_{k,n})D\Lambda(\gamma_k; \chi_{D_{k,n}}) + \frac{\alpha_{k,n_0}}{\betaL}(\betaL-\alpha_{k,n_0})D\Lambda(\gamma_k; \chi_{C}).
\end{align*}
In fact, we will consider sets $C$ that belong to families of \emph{admissible test inclusions} relative to some subset $E\subseteq\overline{\Omega}$:
\begin{equation*}
	\mathcal{A}(E) := \{ C\subseteq \overline{E} \mid C \text{ is closed and } \rum{R}^d\setminus C \text{ is connected} \}.
\end{equation*}
In what follows these test inclusions will be used to determine $D_{k+1}$ from $\gamma_k$. Note that Theorem~\ref{thm:findsupp} below essentially corresponds to a modified version of the usual monotonicity method for indefinite inclusions, applied separately on each connected component of $D_{k}$; cf.~\cite[Theorem~2.3]{Garde_2019} and \cite[Section~4.2]{Harrach13}. 
\begin{theorem} \label{thm:findsupp}
	Let $n_0\in I_k$, then for all $C\in \mathcal{A}(D_{k,n_0})$ it holds
	\begin{equation}
		D_{k+1} \cap D_{k,n_0} \subseteq C \qquad \text{if and only if} \qquad T_{k,n_0}^{\pm}(C) \geq 0. \label{eq:findsuppcondition}
	\end{equation}
	In particular, $D_{k+1} \cap D_{k,n_0} = \cap\{ C\in \mathcal{A}(D_{k,n_0}) \mid T_{k,n_0}^{\pm}(C) \geq 0 \}$.
\end{theorem}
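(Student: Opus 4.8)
The plan is to prove the two implications in \eqref{eq:findsuppcondition} separately and then read off the intersection formula. Throughout I abbreviate $u_f := u_f^{\gamma_k}$ and use the quadratic formula \eqref{eq:lambdaderiv} to rewrite every term $D\Lambda(\gamma_k;\chi_E)$ occurring in $T_{k,n_0}^{\pm}(C)$ as $-\int_E\abs{\nabla u_f}^2\,\di x$. Testing $T_{k,n_0}^{\pm}(C)$ against $f\in L^2_\diamond(\Gamma)$ then produces an expression involving only $\inner{(\Lambda(\gamma)-\Lambda(\gamma_k))f,f}$ and integrals of $\abs{\nabla u_f}^2$ over $C$ and the components $D_{k,n}$.

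For the forward direction, assume $D_{k+1}\cap D_{k,n_0}\subseteq C$. For $T_{k,n_0}^+(C)$ I apply the upper estimate of Lemma~\ref{lemma:mono} with $\sigma_2=\gamma_k$, $\sigma_1=\gamma$ to bound $\inner{(\Lambda(\gamma)-\Lambda(\gamma_k))f,f}$ from below by $-\int(\gamma-\gamma_k)\abs{\nabla u_f}^2\,\di x$, and then insert the conservative upper bound \eqref{eq:gammadiffup} on $\gamma-\gamma_k$. The integrals over the components $D_{k,n}$ with $n\neq n_0$ cancel exactly against the corresponding derivative terms, leaving $\inner{T_{k,n_0}^+(C)f,f}\geq(\betaU-\alpha_{k,n_0})\bigl(\int_C\abs{\nabla u_f}^2\,\di x-\int_{D_{k+1}\cap D_{k,n_0}}\abs{\nabla u_f}^2\,\di x\bigr)\geq 0$, using $\betaU\geq\alpha_{k,n_0}$ and the containment. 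For $T_{k,n_0}^-(C)$ I instead use the lower estimate with $\sigma_2=\gamma_k$, together with the pointwise inequality $\frac{\gamma_k}{\gamma}(\gamma-\gamma_k)\geq\frac{\alpha_{k,n}}{\betaL}(\betaL-\alpha_{k,n})$ on each $D_{k,n}$ (valid since $\gamma\geq\betaL$ and $\alpha_{k,n}>0$) and the bound \eqref{eq:gammadifflow}; the same cancellation leaves the nonpositive factor $\frac{\alpha_{k,n_0}}{\betaL}(\betaL-\alpha_{k,n_0})$ multiplying the nonpositive quantity $\int_{D_{k+1}\cap D_{k,n_0}}\abs{\nabla u_f}^2\,\di x-\int_C\abs{\nabla u_f}^2\,\di x$, so again $\inner{T_{k,n_0}^-(C)f,f}\geq 0$. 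Hence $T_{k,n_0}^{\pm}(C)\geq 0$.

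For the reverse direction I argue by contraposition. Suppose $D_{k+1}\cap D_{k,n_0}\not\subseteq C$, so there is $x_0\in(D_{k+1}\cap D_{k,n_0})\setminus C$ lying in a component $D_{k+1,m_0}$ with $\mathfrak{n}_k(m_0)=n_0$ and $c_{k+1,m_0}\neq 0$. Using that $C$ is closed, that $D_{k+1,m_0}$ is the closure of an open set (Remark~\ref{remark:assump}(2)), and that $D_{k+2}\subseteq H_\tau(D_{k+1})$ lies compactly inside $D_{k+1}$, I select a small open ball $B\subseteq(D_{k+1,m_0}\setminus D_{k+2})\setminus C$ on which $\gamma-\gamma_k\equiv c_{k+1,m_0}$. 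I then apply Lemma~\ref{lemma:locpot} for the conductivity $\gamma_k$, with this $B$ and a connected set $U$ described below, obtaining $u_i=u_{f_i}^{\gamma_k}$ with $\int_B\abs{\nabla u_i}^2\,\di x\to\infty$ and $\int_{\Omega\setminus U}\abs{\nabla u_i}^2\,\di x\to 0$. If $c_{k+1,m_0}>0$ I estimate $\inner{T_{k,n_0}^+(C)f_i,f_i}$ from above, bounding $\inner{(\Lambda(\gamma)-\Lambda(\gamma_k))f_i,f_i}$ via the lower monotonicity estimate; if $c_{k+1,m_0}<0$ I estimate $\inner{T_{k,n_0}^-(C)f_i,f_i}$ from above via the upper monotonicity estimate. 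In either case the derivative terms vanish in the limit (because $U$ avoids $C$ and all $D_{k,n}$ with $n\neq n_0$), the contribution from $B$ tends to $-\infty$, and the contribution from $\Omega\setminus U$ is negligible, so the relevant operator is not positive semi-definite, contradicting $T_{k,n_0}^{\pm}(C)\geq 0$.

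The crux is the choice of $U$. I need $U\subset\overline{\Omega}$ relatively open, connected, meeting $\Gamma$, with connected complement, containing $B$, disjoint from $C$ and from every $D_{k,n}$ with $n\neq n_0$, and---decisively---such that $\gamma-\gamma_k$ has the correct sign throughout $U$ (nonnegative when $c_{k+1,m_0}>0$, nonpositive when $c_{k+1,m_0}<0$); otherwise the uncontrolled energy of $u_i$ on $U\setminus B$ could spoil the sign of the integral. This is exactly where the nested structure enters: since $\gamma-\gamma_k$ is supported in $D_{k+1}$, it vanishes on the perturbation-free ``shells'' $D_{j,p_j}\setminus D_{j+1}$ of the ancestor chain $D_{k,n_0}=D_{k,p_k}\subseteq\cdots\subseteq D_{1,p_1}$, so the only nonzero value met along a tunnel routed through these shells, through $\Omega\setminus D_1$ out to $\Gamma$, and into the outer shell $D_{k+1,m_0}\setminus D_{k+2}$ containing $B$, is precisely $c_{k+1,m_0}$. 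The $\tau$-thinning of Assumption~\ref{assump}(iii) makes these shells nonempty, and the connected-complement conditions of Assumption~\ref{assump}(ii) let the tunnel bypass every sibling component while reaching $\Gamma$; rigorously producing such a connected $U$ with connected complement is the main technical obstacle. Finally, the equivalence \eqref{eq:findsuppcondition} shows that every $C\in\mathcal{A}(D_{k,n_0})$ with $T_{k,n_0}^{\pm}(C)\geq 0$ contains $D_{k+1}\cap D_{k,n_0}$, while $D_{k+1}\cap D_{k,n_0}$ is itself readily checked to be such an admissible set; intersecting over the family then yields $D_{k+1}\cap D_{k,n_0}=\cap\{C\in\mathcal{A}(D_{k,n_0})\mid T_{k,n_0}^{\pm}(C)\geq 0\}$.
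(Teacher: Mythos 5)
Your proposal follows essentially the same route as the paper's proof: the identical monotonicity estimates from Lemma~\ref{lemma:mono} combined with \eqref{eq:lambdaderiv} and the conservative bounds \eqref{eq:gammadiffup}--\eqref{eq:gammadifflow} for the ``$\Rightarrow$'' direction, and the same contrapositive localized-potentials argument for ``$\Leftarrow$'', using a connected set $U$ reaching $\Gamma$ that meets only the single component $D_{k+1,m_0}$ and avoids $(D_k\setminus D_{k,n_0})\cup C\cup(D_{k+1}\setminus D_{k+1,m_0})\cup D_{k+2}$, with the case split on the sign of $c_{k+1,m_0}$. The only caveat is your ball selection: a ball near an arbitrary $x_0\in(D_{k+1}\cap D_{k,n_0})\setminus C$ may lie inside $D_{k+2}$, so to obtain $B\subseteq(D_{k+1,m_0}\setminus D_{k+2})\setminus C$ one must additionally invoke connectedness of $\rum{R}^d\setminus C$ to locate a point of $F_\tau(D_{k+1,m_0})$ outside $C$ --- a step the paper glosses over with comparable brevity, so this does not separate your argument from theirs.
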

\begin{proof}
	First we prove the direction ``$\Rightarrow$" in the \emph{if and only if} statement. Assume $D_{k+1} \cap D_{k,n_0} \subseteq C$, then it holds by Lemma~\ref{lemma:mono}, \eqref{eq:lambdaderiv}, and \eqref{eq:gammadiffup},
	\begin{align*}
		-\inner{T_{k,n_0}^+(C)f,f} &\leq \int_{\Omega} \Bigl[ \gamma-\gamma_k - \smashoperator{\sum_{n\in I_k\setminus\{n_0\}}}(\betaU - \alpha_{k,n})\chi_{D_{k,n}} - (\betaU-\alpha_{k,n_0})\chi_{C} \Bigr]\abs{\nabla u_f^{\gamma_k}}^2\,\di x \\
		&\leq (\alpha_{k,n_0}-\betaU)\int_{C\setminus (D_{k+1} \cap D_{k,n_0})}\abs{\nabla u_f^{\gamma_k}}^2\,\di x \leq 0
	\end{align*}
	for all $f\in L^2_\diamond(\Gamma)$, i.e.\ $T_{k,n_0}^+(C)\geq 0$. 
	
	Likewise, since $\frac{\gamma_k}{\gamma} \leq \frac{\alpha_{k,n}}{\betaL}$ in $D_{k,n}$ and $\betaL\leq \alpha_{k,n}$ then Lemma~\ref{lemma:mono}, \eqref{eq:lambdaderiv}, and \eqref{eq:gammadifflow} imply
	\begin{align*}
		\inner{T_{k,n_0}^-(C)f,f} & \geq \int_{\Omega} \Bigl[ \frac{\gamma_k}{\gamma}(\gamma-\gamma_k) - \smashoperator{\sum_{\substack{\phantom{x}\\{n\in I_k\setminus\{n_0\}}}}}\frac{\alpha_{k,n}}{\betaL}(\betaL - \alpha_{k,n})\chi_{D_{k,n}} - \frac{\alpha_{k,n_0}}{\betaL}(\betaL-\alpha_{k,n_0})\chi_{C} \Bigr]\abs{\nabla u_f^{\gamma_k}}^2\,\di x \\
		&\geq \frac{\alpha_{k,n_0}}{\betaL}(\alpha_{k,n_0}-\betaL)\int_{C\setminus (D_{k+1} \cap D_{k,n_0})}\abs{\nabla u_f^{\gamma_k}}^2\,\di x \geq 0
	\end{align*}
	for all $f\in L^2_\diamond(\Gamma)$, i.e.\ $T_{k,n_0}^-(C)\geq 0$. This concludes the first part of the proof.  
	
	The proof of the other direction ``$\Leftarrow$" of the \emph{if and only if} statement is shown as a contrapositive, i.e.\ assume $D_{k+1} \cap D_{k,n_0} \not\subseteq C$ then we will in the following contradict one of the inequalities $T^{\pm}_{k,n_0}\geq 0$.
	
	We now pick a relatively open connected set $U\subset\overline{\Omega}$, which intersects $\Gamma$, has connected complement, and satisfies: $D_{k+1,m_0}\cap U$ contains an open ball $B$ for some $m_0\in I_{k+1}$ with $\mathfrak{n}_k(m_0) = n_0$ and 
	\begin{equation*}
		U\cap \left[(D_k\setminus D_{k,n_0})\cup C\cup (D_{k+1}\setminus D_{k+1,m_0})\cup D_{k+2}\right] = \emptyset. \label{eq:Uavoid}
	\end{equation*}
	The reasoning behind the properties of $U$ is: Assumption~\ref{assump} and $C\in \mathcal{A}(D_{k,n_0})$ imply the set $\overline{\Omega}\setminus[(D_k\setminus D_{k,n_0})\cup C]$ is connected and contains $\Gamma$. Moreover, $(D_{k+1} \cap D_{k,n_0}) \setminus C$ contains a non-empty open set due to (i) and (iii) of Assumption~\ref{assump} (cf.~Remark~\ref{remark:assump}). Since $D_{k+1}$ comprise finitely many closed connected components (Assumption~\ref{assump}) implies a strictly positive distance between these connected components. Thus $U$ can be chosen to only intersect one connected component of $D_{k+1}\cap D_{k,n_0}$, and furthermore avoid $D_{k+2}$ due to Assumption~\ref{assump}(iii).
	
	This splits the rest of the proof into two possible cases, related to which one of the inequalities $T_{k,n_0}^{\pm} \geq 0$ that will be contradicted:
	\begin{equation*}
		\text{(a): } m_0\in I_{k+1}^+ \qquad \text{or} \qquad \text{(b): } m_0\in I_{k+1}^-.
	\end{equation*}
	
	\emph{Case (a).} Note that $\gamma = \alpha_{k,n_0} + c_{k+1,m_0}$ in $B$ with $c_{k+1,m_0}>0$ and $\gamma\geq \gamma_k$ in $U$ (equality holds in $U\setminus D_{k+1,m_0}$). The main idea is to construct potentials $u$ via Lemma~\ref{lemma:locpot} where simultaneously $\abs{\nabla u}^2$ is large inside $B$ and small outside $U$, in such a way that Lemma~\ref{lemma:mono} contradicts the inequality $T_{k,n_0}^+(C)\geq 0$. Since $\gamma_k$ is piecewise analytic and by the properties of $U$, it follows from Lemma~\ref{lemma:locpot} that there are sequences $(f_i)\subset L^2_\diamond(\Gamma)$ of current densities and corresponding localized potentials $(u_i)\subset H_\diamond^1(\Omega)$ that solve \eqref{eq:condeq} with conductivity $\gamma_k$, and satisfy \eqref{eq:locpot}.
	
	Denoting 
	\begin{equation*}
		\hat{\gamma} := \frac{\gamma_k}{\gamma}(\gamma-\gamma_k)- \smashoperator{\sum_{n\in I_k\setminus\{n_0\}}}(\betaU - \alpha_{k,n})\chi_{D_{k,n}} - (\betaU-\alpha_{k,n_0})\chi_{C},
	\end{equation*}
	we have by Lemma~\ref{lemma:mono}, \eqref{eq:lambdaderiv}, and \eqref{eq:locpot}
	\begin{align*}
		-\inner{T_{k,n_0}^+(C)f_i,f_i} &\geq \int_B \frac{\gamma_k}{\gamma}(\gamma-\gamma_k)\abs{\nabla u_i}^2\,\di x + \int_{U\setminus B} \frac{\gamma_k}{\gamma}(\gamma-\gamma_k)\abs{\nabla u_i}^2\,\di x + \int_{\Omega\setminus U} \hat{\gamma}\abs{\nabla u_i}^2\,\di x \\
		&\geq \frac{\alpha_{k,n_0}c_{k+1,m_0}}{\alpha_{k,n_0}+c_{k+1,m_0}} \int_B\abs{\nabla u_i}^2\, \di x + \inf(\hat{\gamma}) \int_{\Omega\setminus U}\abs{\nabla u_i}^2\,\di x \to \infty \text{ for } i\to\infty,
	\end{align*}
	from which we conclude $T_{k,n_0}^+(C) \not\geq 0$.
	
	\emph{Case (b).} In this case we have $\gamma = \alpha_{k,n_0} + c_{k+1,m_0}$ in $B$ with $c_{k+1,m_0}<0$ and $\gamma\leq \gamma_k$ in $U$. Denote 
	\begin{equation*}
		\tilde{\gamma} := \gamma-\gamma_k - \smashoperator{\sum_{\substack{\phantom{x}\\{n\in I_k\setminus\{n_0\}}}}}\frac{\alpha_{k,n}}{\betaL}(\betaL - \alpha_{k,n})\chi_{D_{k,n}} - \frac{\alpha_{k,n_0}}{\betaL}(\betaL-\alpha_{k,n_0})\chi_{C}.
	\end{equation*}
	Applying the above construction of localized potentials satisfying \eqref{eq:locpot}, we contradict the inequality $T_{k,n_0}^-\geq 0$ using Lemma~\ref{lemma:mono} and \eqref{eq:lambdaderiv}:
	\begin{align*}
	\inner{T_{k,n_0}^-(C)f_i,f_i} &\leq \int_B (\gamma-\gamma_k)\abs{\nabla u_i}^2\,\di x + \int_{U\setminus B} (\gamma-\gamma_k)\abs{\nabla u_i}^2\,\di x + \int_{\Omega\setminus U} \tilde{\gamma}\abs{\nabla u_i}^2\,\di x \\
	&\leq c_{k+1,m_0} \int_B\abs{\nabla u_i}^2\, \di x + \sup(\tilde{\gamma}) \int_{\Omega\setminus U}\abs{\nabla u_i}^2\,\di x \to -\infty \text{ for } i\to\infty,
	\end{align*}
	hence concluding $T_{k,n_0}^-(C) \not\geq 0$.
	
	The equality $D_{k+1} \cap D_{k,n_0} = \cap \mathcal{M}$ with $\mathcal{M} := \{ C\in \mathcal{A}(D_{k,n_0}) \mid T_{k,n_0}^{\pm}(C) \geq 0 \}$ is satisfied via \eqref{eq:findsuppcondition} since $D_{k+1} \cap D_{k,n_0}\subseteq C$ for each $C\in \mathcal{M}$ and that $D_{k+1} \cap D_{k,n_0}$ itself is a member of $\mathcal{M}$.
\end{proof}

\section{Reconstruction of \texorpdfstring{$\gamma_{k+1}$ from $\gamma_k$, $\Lambda(\gamma)$, and $D_{k+1}$}{next layer-truncated conductivity}} \label{sec:findconst}

Now that Theorem~\ref{thm:findsupp} gives a way of determining $D_{k+1}$ from $\gamma_k$, the next step is to determine the constant $c_{k+1,m_0}$ for each $m_0\in I_{k+1}$ in order to obtain $\gamma_{k+1}$. For this purpose we define for $m_0\in I_{k+1}$, $s\in[0,\betaU-\hat{\alpha}_{k,m_0}]$, and $t\in [\betaL-\hat{\alpha}_{k,m_0},0]$ the operators
\begin{align*}
S_{k,m_0}^+(s) &:= \Lambda(\gamma) - \Lambda(\gamma_{k,m_0,\betaU} + s\chi_{F_\tau(D_{k+1,m_0})}), \\
S_{k,m_0}^-(t) &:= \Lambda(\gamma_{k,m_0,\betaL} + t\chi_{F_\tau(D_{k+1,m_0})})-\Lambda(\gamma),
\end{align*}
for which $\gamma_{k,m_0,\beta}$ with $\beta\in\{\betaL,\betaU\}$ is defined as
\begin{equation*}
\gamma_{k,m_0,\beta} := \gamma_k + \smashoperator{\sum_{m\in I_{k+1}\setminus\{m_0\}}} (\beta-\hat{\alpha}_{k,m})\chi_{D_{k+1,m}} + (\beta-\hat{\alpha}_{k,m_0})\chi_{H_\tau(D_{k+1,m_0})}. 
\end{equation*}
Recall the definition of $H_\tau$ and $F_\tau$ in \eqref{eq:tauthinning} and \eqref{eq:taulayer}. As we shall see in Theorem~\ref{thm:findconst}, there are two equivalent ways of determining if $m_0\in I_{k+1}$ belongs to $I_{k+1}^+$ or $I_{k+1}^-$. Afterwards, we may find the constant $c_{k+1,m_0}\in [\betaL-\hat{\alpha}_{k,m_0},0)\cup(0,\betaU-\hat{\alpha}_{k,m_0}]$ via an optimization problem, by varying $s$ and $t$ on the outer $\tau$-layer of $D_{k+1,m_0}$, constrained by positive semi-definiteness of $S_{k,m_0}^{\pm}$.
\begin{theorem} \label{thm:findconst}
	Let $m_0\in I_{k+1}$, then it holds
	\begin{align}
		[0,\betaU-\hat{\alpha}_{k,m_0}]\ni s \geq c_{k+1,m_0} &\qquad \text{if and only if} \qquad S_{k,m_0}^+(s)\geq 0, \label{eq:posint} \\
		[\betaL-\hat{\alpha}_{k,m_0},0]\ni t \leq c_{k+1,m_0} &\qquad \text{if and only if} \qquad S_{k,m_0}^-(t)\geq 0. \label{eq:negint}
	\end{align}
	As direct consequences,	
	\begin{align*}
	m_0\in I_{k+1}^+ &\qquad \text{if and only if} \qquad S_{k,m_0}^-(0)\geq 0 \qquad \text{if and only if} \qquad S_{k,m_0}^+(0)\not\geq 0, \\
	m_0\in I_{k+1}^- &\qquad \text{if and only if} \qquad S_{k,m_0}^+(0)\geq 0 \qquad \text{if and only if} \qquad S_{k,m_0}^-(0)\not\geq 0,
	\end{align*}
	and $c_{k+1,m_0}$ is determined via:
	\begin{equation*}
	c_{k+1,m_0} = \begin{cases}
	\min\{ s\in (0,\betaU-\hat{\alpha}_{k,m_0}] \mid S_{k,m_0}^+(s)\geq 0\} & \text{if } m_0\in I_{k+1}^+, \\
	\max\{ t\in [\betaL-\hat{\alpha}_{k,m_0},0) \mid S_{k,m_0}^-(t)\geq 0\} & \text{if } m_0\in I_{k+1}^-.
	\end{cases}
	\end{equation*} 
\end{theorem}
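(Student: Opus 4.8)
The plan is to reduce both equivalences to the monotonicity principle (Lemma~\ref{lemma:mono}) combined with localized potentials (Lemma~\ref{lemma:locpot}), using one decisive geometric fact. By Assumption~\ref{assump}(iii) the deeper layer $D_{k+2}$ lies in $H_\tau(D_{k+1})$, hence at distance at least $\tau$ from $\partial D_{k+1}$, so it is disjoint from the outer $\tau$-layer $F_\tau(D_{k+1,m_0})$. Consequently $\gamma$ is \emph{constant} on $F_\tau(D_{k+1,m_0})$ with value $\hat{\alpha}_{k,m_0}+c_{k+1,m_0}$. Writing $\sigma_s := \gamma_{k,m_0,\betaU}+s\chi_{F_\tau(D_{k+1,m_0})}$ and $\rho_t := \gamma_{k,m_0,\betaL}+t\chi_{F_\tau(D_{k+1,m_0})}$, this constancy is exactly what makes these one-parameter families sign-definitely comparable to $\gamma$ precisely at the thresholds $s=c_{k+1,m_0}$ and $t=c_{k+1,m_0}$.

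First I would prove \eqref{eq:posint}. For the implication $s\geq c_{k+1,m_0}\Rightarrow S_{k,m_0}^+(s)\geq 0$, I compare $\sigma_s$ with $\gamma$ regionwise: outside $D_{k+1}$ both equal $\gamma_k$; on the components $D_{k+1,m}$ with $m\neq m_0$ and on $H_\tau(D_{k+1,m_0})$ one has $\sigma_s=\betaU\geq\gamma$; and on $F_\tau(D_{k+1,m_0})$ the constancy of $\gamma$ gives $\sigma_s-\gamma=s-c_{k+1,m_0}\geq 0$. Thus $\sigma_s\geq\gamma$ pointwise, and the \emph{lower} bound of Lemma~\ref{lemma:mono} (with $\sigma_2=\gamma$, $\sigma_1=\sigma_s$) yields $\inner{S_{k,m_0}^+(s)f,f}\geq\int_\Omega\frac{\gamma}{\sigma_s}(\sigma_s-\gamma)\abs{\nabla u_f^\gamma}^2\,\di x\geq 0$ for all $f$.

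For the converse I argue by contraposition: if $s<c_{k+1,m_0}$ then $\sigma_s-\gamma=s-c_{k+1,m_0}<0$ on $F_\tau(D_{k+1,m_0})$, while $\sigma_s\leq\gamma$ on $(\overline\Omega\setminus D_{k+1})\cup F_\tau(D_{k+1,m_0})$. The key move here is that $\gamma=\gamma_N$ is piecewise analytic, so I may apply Lemma~\ref{lemma:locpot} with base conductivity $\gamma$ (rather than $\sigma_s$, whose $\tau$-level sets need not be piecewise smooth). Exactly as in the proof of Theorem~\ref{thm:findsupp}, I choose a relatively open connected $U$ meeting $\Gamma$ with connected complement, containing an open ball $B\subset F_\tau(D_{k+1,m_0})$ and avoiding $H_\tau(D_{k+1,m_0})$ together with every component $D_{k+1,m}$, $m\neq m_0$; on such a $U$ one has $\sigma_s\leq\gamma$. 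Feeding the resulting localized potentials into the \emph{upper} bound of Lemma~\ref{lemma:mono} gives $\inner{S_{k,m_0}^+(s)f_i,f_i}\leq\int_\Omega(\sigma_s-\gamma)\abs{\nabla u_i}^2\,\di x$, whose $B$-contribution tends to $-\infty$, whose $U\setminus B$-contribution is $\leq 0$, and whose $\Omega\setminus U$-contribution tends to $0$; hence $S_{k,m_0}^+(s)\not\geq 0$. The proof of \eqref{eq:negint} is entirely symmetric, now comparing $\rho_t$ with $\gamma$ (so $\rho_t\leq\gamma$ iff $t\leq c_{k+1,m_0}$) and again localizing with respect to the piecewise-analytic $\gamma$: the upper bound handles the forward implication, and the lower bound together with localized potentials handles the contrapositive.

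Finally, the displayed consequences follow by specialization. Setting $s=0$ in \eqref{eq:posint} and $t=0$ in \eqref{eq:negint} gives $S_{k,m_0}^+(0)\geq 0\Leftrightarrow c_{k+1,m_0}\leq 0$ and $S_{k,m_0}^-(0)\geq 0\Leftrightarrow c_{k+1,m_0}\geq 0$; since $c_{k+1,m_0}\neq 0$ by Definition~\ref{def:pclc}, each is an exact dichotomy separating $m_0\in I_{k+1}^+$ from $m_0\in I_{k+1}^-$, yielding the four equivalences. For the value, \eqref{eq:posint} shows that when $m_0\in I_{k+1}^+$ the set $\{s\in(0,\betaU-\hat{\alpha}_{k,m_0}]\mid S_{k,m_0}^+(s)\geq 0\}$ equals $[c_{k+1,m_0},\betaU-\hat{\alpha}_{k,m_0}]$, whose infimum is $c_{k+1,m_0}$, and symmetrically \eqref{eq:negint} identifies $c_{k+1,m_0}$ as the stated supremum when $m_0\in I_{k+1}^-$. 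The main obstacle is the converse direction: constructing $U$ and verifying the regionwise sign of $\sigma_s-\gamma$ (resp.\ $\rho_t-\gamma$), where the constancy of $\gamma$ on the outer $\tau$-layer and the decision to localize against the piecewise-analytic $\gamma$ are precisely what make the localized-potentials argument succeed.
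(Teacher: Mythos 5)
Your proposal is correct, and it follows the same two-lemma strategy as the paper (monotonicity principle for one implication, localized potentials for the contrapositive, same choice of $U$ and $B$, same specialization $s=0$, $t=0$ and infimum/supremum identification at the end). There is, however, one genuine and worthwhile difference in how the lemmas are anchored. The paper compares $\gamma$ against $\hat{\gamma}:=\gamma_{k,m_0,\betaU}+s\chi_{F_\tau(D_{k+1,m_0})}$ (resp.\ $\tilde{\gamma}$) using the bound \eqref{eq:newgammabnd}, and, crucially, in the contrapositive it generates the localized potentials \emph{for the test conductivity} $\hat{\gamma}$ (resp.\ $\tilde{\gamma}$), which forces it to assert that these test conductivities are piecewise analytic --- an assertion that hinges on the boundaries of the inner parallel sets $H_\tau(D_{k+1,m_0})$ and $F_\tau(D_{k+1,m_0})$ being regular enough for Lemma~\ref{lemma:locpot}, something Assumption~\ref{assump} does not obviously guarantee and the paper does not prove. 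You instead establish the pointwise orderings $\sigma_s\geq\gamma$ and $\rho_t\leq\gamma$ regionwise, use the \emph{opposite} side of Lemma~\ref{lemma:mono} (so that all gradients are taken with respect to $\gamma$), and run the localized potentials for the PCLC conductivity $\gamma$ itself, which is piecewise analytic by construction. The conclusions and quantitative blow-up estimates come out the same, but your anchoring at $\gamma$ sidesteps the regularity question for the test conductivities entirely, which is a small but real gain in rigor; the paper's anchoring at $\hat{\gamma},\tilde{\gamma}$ has the cosmetic advantage that the unknown $\gamma$ only enters the estimates through the difference $\gamma-\hat{\gamma}$. Your handling of the ``direct consequence'' statements (the sign dichotomy via $c_{k+1,m_0}\neq 0$ and the identification of the constrained set as $[c_{k+1,m_0},\betaU-\hat{\alpha}_{k,m_0}]$) is also complete, and is in fact spelled out in more detail than in the paper, which leaves that part implicit.
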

\begin{proof}
Note that $\gamma = \hat{\alpha}_{k,m_0} + c_{k+1,m_0}$ in the set $F_\tau(D_{k+1,m_0})$ due to Assumption~\ref{assump}(iii). Moreover, $\gamma_k = \hat{\alpha}_{k,m_0}$ in $F_\tau(D_{k+1,m_0})$, so writing 
\begin{equation*}
	\gamma-\gamma_{k,m_0,\betaU} = (\gamma-\gamma_{k,m_0,\betaU})\chi_{\overline{\Omega}\setminus F_\tau(D_{k+1,m_0})} + (\gamma-\gamma_{k,m_0,\betaU})\chi_{F_\tau(D_{k+1,m_0})}
\end{equation*}
we may apply \eqref{eq:gammadiffup} to bound the first term from above by 0. Likewise for $\gamma-\gamma_{k,m_0,\betaL}$ we obtain a lower bound using \eqref{eq:gammadifflow}, resulting in
\begin{equation} \label{eq:newgammabnd}
	\gamma-\gamma_{k,m_0,\betaU} \leq c_{k+1,m_0}\chi_{F_\tau(D_{k+1,m_0})} \leq \gamma-\gamma_{k,m_0,\betaL}.
\end{equation}	
We begin by proving \eqref{eq:posint}, hence denote the piecewise analytic $L^\infty_+(\Omega)$-function
\begin{equation*}
	\hat{\gamma} := \gamma_{k,m_0,\betaU}+s\chi_{F_\tau(D_{k+1,m_0})},
\end{equation*}
and assume $s\geq c_{k+1,m_0}$. By virtue of Lemma~\ref{lemma:mono} and \eqref{eq:newgammabnd}
\begin{align*}
	-\inner{S_{k,m_0}^+(s)f,f} &\leq \int_{\Omega} \left[ \gamma-\gamma_{k,m_0,\betaU}-s\chi_{F_\tau(D_{k+1,m_0})} \right] \abs{\nabla u_f^{\hat{\gamma}}}^2\,\di x \\
	&\leq (c_{k+1,m_0}-s)\int_{F_\tau(D_{k+1,m_0})}\abs{\nabla u_f^{\hat{\gamma}}}^2\,\di x \leq 0
\end{align*}
for all $f\in L^2_\diamond(\Gamma)$, i.e.\ $S_{k,m_0}^+(s) \geq 0$ for $s\geq c_{k+1,m_0}$.

For the opposite implication we assume $s < c_{k+1,m_0}$. In a similar way to the proof of Theorem~\ref{thm:findsupp}, we pick a relatively open connected set $U\subset\overline{\Omega}$, which intersects $\Gamma$, has connected complement, satisfies $(D_{k+1}\setminus D_{k+1,m_0})\cap U = H_{\tau}(D_{k+1,m_0})\cap U = \emptyset$, and $F_{\tau}(D_{k+1,m_0})\cap U$ contains an open ball $B$. Once again this is possible due to Assumption~\ref{assump}. Hence $\gamma-\hat{\gamma} = c_{k+1,m_0}-s > 0$ in $B$ and $\gamma\geq\hat{\gamma}$ in $U$. 

Now let $(f_i)\subset L^2_\diamond(\Gamma)$ and $(u_i)\subset H^1_\diamond(\Omega)$ be chosen via Lemma~\ref{lemma:locpot} with respect to the sets $U$ and $B$ for the conductivity $\hat{\gamma}$. Lemma~\ref{lemma:mono} gives
\begin{align*}
	-\inner{S_{k,m_0}^+(s)f_i,f_i} &\geq \int_\Omega \frac{\hat{\gamma}}{\gamma}(\gamma-\hat{\gamma})\abs{\nabla u_i}^2\,\di x \\
	&\geq \frac{\hat{\alpha}_{k,m_0}+s}{\hat{\alpha}_{k,m_0}+c_{k+1,m_0}}(c_{k+1,m_0}-s)\int_B\abs{\nabla u_i}^2\,\di x + \inf(\tfrac{\hat{\gamma}}{\gamma}(\gamma-\hat{\gamma})) \int_{\Omega\setminus U} \abs{\nabla u_i}^2\,\di x.
\end{align*}
Since $0\leq s < c_{k+1,m_0}$ then \eqref{eq:locpot} implies $\lim_{i\to\infty}\inner{S_{k,m_0}^+(s)f_i,f_i}= -\infty$. We conclude $S_{k,m_0}^+(s)\not\geq 0$ for $s<c_{k+1,m_0}$.

Next we prove \eqref{eq:negint} in an analogous way. Denote the piecewise analytic $L^\infty_+(\Omega)$-function
\begin{equation*}
	\tilde{\gamma} := \gamma_{k,m_0,\betaL} + t\chi_{F_\tau(D_{k+1,m_0})}.
\end{equation*}
First we assume $t\leq c_{k+1,m_0}$, and since $t\in[\betaL-\hat{\alpha}_{k,m_0},0]$ it holds $\frac{\tilde{\gamma}}{\gamma}\geq \frac{\betaL}{\betaU}$ in $\Omega$. Thus from Lemma~\ref{lemma:mono} and \eqref{eq:newgammabnd} it holds
\begin{align*}
	\inner{S_{k,m_0}^-(t)f,f} &\geq \int_{\Omega} \frac{\tilde{\gamma}}{\gamma}(\gamma-\tilde{\gamma}) \abs{\nabla u_f^{\tilde{\gamma}}}^2\,\di x \geq \frac{\betaL}{\betaU} (c_{k+1,m_0}-t)\int_{F_\tau(D_{k+1,m_0})} \abs{\nabla u_f^{\tilde{\gamma}}}^2\,\di x \geq 0
\end{align*}
for all $f\in L^2_\diamond(\Gamma)$, i.e.\ $S_{k,m_0}^-(t) \geq 0$ for $t\leq c_{k+1,m_0}$.

For the opposite implication we assume $t>c_{k+1,m_0}$ and pick the sets $U$ and $B$ in exactly the same way as in the proof of \eqref{eq:posint}. In particular, $\gamma-\tilde{\gamma} = c_{k+1,m_0}-t<0$ in $B$ and $\gamma\leq \tilde{\gamma}$ in $U$. Now let $(f_i)\subset L^2_\diamond(\Gamma)$ and $(u_i)\subset H^1_\diamond(\Omega)$ be chosen according to Lemma~\ref{lemma:locpot} for the sets $U$ and $B$ and with conductivity $\tilde{\gamma}$.

Applying Lemma~\ref{lemma:mono} and \eqref{eq:locpot} yields
\begin{align*}
	\inner{S_{k,m_0}^-(t)f_i,f_i} &\leq \int_\Omega (\gamma-\tilde{\gamma})\abs{\nabla u_i}^2\,\di x \\
	&\leq (c_{k+1,m_0}-t)\int_B \abs{\nabla u_i}^2\,\di x + \sup(\gamma-\tilde{\gamma})\int_{\Omega\setminus U} \abs{\nabla u_i}^2\,\di x\to -\infty \text{ for } i\to\infty,
\end{align*}
whence $S_{k,m_0}^-(t) \not\geq 0$ for $t > c_{k+1,m_0}$.
\end{proof}

\begin{remark} \label{remark:m0}
	Based on the proofs of Theorem~\ref{thm:findsupp} and Theorem~\ref{thm:findconst}, it is straightforward to show that the conclusion of whether $m_0\in I_{k+1}$ belongs to $I_{k+1}^+$ or $I_{k+1}^-$ in Theorem~\ref{thm:findconst} is preserved when replacing $S_{k,m_0}^{\pm}(0)$ with $\tilde{S}_{k,m_0}^{\pm}$ defined below, where $\tilde{D} := H_\tau(D_{k+1,m_0})$:
	\begin{align*}
		\tilde{S}_{k,m_0}^+ &:= \Lambda(\gamma) - \Lambda(\gamma_k) - \smashoperator{\sum_{m\in I_{k+1}\setminus\{m_0\}}} (\betaU-\hat{\alpha}_{k,m})D\Lambda(\gamma_k;\chi_{D_{k+1,m}}) - (\betaU-\hat{\alpha}_{k,m_0})D\Lambda(\gamma_k;\chi_{\tilde{D}}), \\
		\tilde{S}_{k,m_0}^- &:= \Lambda(\gamma_k) - \Lambda(\gamma) + \smashoperator{\sum_{\substack{\phantom{x}\\{m\in I_{k+1}\setminus\{m_0\}}}}} \frac{\hat{\alpha}_{k,m}}{\betaL}(\betaL - \hat{\alpha}_{k,m})D\Lambda(\gamma_k; \chi_{D_{k+1,m}}) + \frac{\hat{\alpha}_{k,m_0}}{\betaL}(\betaL-\hat{\alpha}_{k,m_0})D\Lambda(\gamma_k; \chi_{\tilde{D}}).
	\end{align*}
	It is tempting to also use $D\Lambda$ to apply the variation of $s$ and $t$ on $F_\tau(D_{k+1,m_0})$ in Theorem~\ref{thm:findconst}. However, the set $U$ for the localized potentials will intersect part of the set on which $D\Lambda$ is applied (unlike in the proof of Theorem~\ref{thm:findsupp}, where this is specifically avoided), and the resulting integrals do not lead to a proof of the desired assertion. 
\end{remark}

\section{Monotonicity-based reconstruction of PCLC conductivities} \label{sec:monorecon}

We can now summarize the reconstruction method based on Theorem~\ref{thm:findsupp} and Theorem~\ref{thm:findconst} in the following way:
\begin{enumerate}[(1)]
	\item Let $\gamma_k$ for some $k\in\{0,1,\dots,N\}$ be given (initially $\gamma_0 = c_0$ with $D_0 := \overline{\Omega}$).
	\item Determine $D_{k+1}$ via: for each $n_0\in I_k$ using Theorem~\ref{thm:findsupp} we find
	\begin{equation*}
		D_{k+1} \cap D_{k,n_0} = \cap\{ C\in \mathcal{A}(D_{k,n_0}) \mid T_{k,n_0}^{\pm}(C) \geq 0 \}.
	\end{equation*}
	\item For each $m_0\in I_{k+1}$ we employ Theorem~\ref{thm:findconst}/Remark~\ref{remark:m0} to determine if $m_0\in \smash{I_{k+1}^+}$ or $m_0\in \smash{I_{k+1}^-}$ by the positive semi-definiteness (or lack thereof) of either 
	\begin{equation*}
		S_{k,m_0}^+(0), \qquad S_{k,m_0}^-(0), \qquad \tilde{S}_{k,m_0}^+, \qquad {or} \qquad \tilde{S}_{k,m_0}^-.
	\end{equation*}
	\item Theorem~\ref{thm:findconst} determines $c_{k+1,m_0}$ as:
	\begin{equation*}
	c_{k+1,m_0} = \begin{cases}
	\min\{ s\in (0,\betaU-\hat{\alpha}_{k,m_0}] \mid S_{k,m_0}^+(s)\geq 0\} & \text{if } m_0\in I_{k+1}^+, \\
	\max\{ t\in [\betaL-\hat{\alpha}_{k,m_0},0) \mid S_{k,m_0}^-(t)\geq 0\} & \text{if } m_0\in I_{k+1}^-.
	\end{cases}
	\end{equation*} 
	\item The above steps determine $\gamma_{k+1}$. Repeat the above steps iteratively, until we reach $\gamma_{N+1} = \gamma_N$ by finding $D_{N+1} = \emptyset$ in step~(2), hence concluding the reconstruction method.
\end{enumerate}

\begin{remark}
	Note that numerical implementation of step~(2) above can be handled, both in terms of regularization theory and practical implementation, via a layer peeling approach \cite[Theorem~3.1 and Algorithm~1]{Garde_2019}. For other considerations in this direction see also \cite{GardeStaboulis_2016,Harrach15,Garde_2017a}. Step~(4) can be handled straightforwardly via bisection due to \eqref{eq:posint} and \eqref{eq:negint} in Theorem~\ref{thm:findconst}.
\end{remark}

\section{Simplifications when each layer only has a single connected component} \label{sec:simpleexample}

This section will illustrate the considerable simplifications to the reconstruction method, in the special case when each layer $D_j$ only consists of a single connected component. Hence the complicated expressions dedicated to marginalizing other components are no longer required.

In this situation, we may name the constants $c_j$ rather than $c_{j,n}$ and write
\begin{equation}
	\gamma_k := \sum_{j=0}^k c_j\chi_{D_j},\enskip k \in \{0,1,\dots,N\},
\end{equation}
using the convention that $D_0 := \overline{\Omega}$. Again we have $\gamma = \gamma_N$. Recall that $c_0 > 0$ is assumed known, and for each $k\in\{0,1,\dots,N-1\}$ we must reconstruct the set $D_{k+1}$ and constant $c_{k+1}\in \mathbb{R}\setminus\{0\}$ based on knowledge of $\gamma_k$ and $\Lambda(\gamma)$.

Define $\alpha_k := \sum_{j=0}^k c_j$ for each $k\in\{0,1,\dots,N\}$, and for measurable $C\subseteq \overline{\Omega}$ we define the operators
\begin{align*}
T_k^+(C) &:= \Lambda(\gamma) - \Lambda(\gamma_k) - (\betaU-\alpha_{k})D\Lambda(\gamma_k; \chi_{C}), \\
T_k^-(C) &:= \Lambda(\gamma_k) - \Lambda(\gamma) + \frac{\alpha_{k}}{\betaL}(\betaL-\alpha_{k})D\Lambda(\gamma_k; \chi_{C}).
\end{align*}
For $s\in[0,\betaU-\alpha_{k}]$ and $t\in [\betaL-\alpha_{k},0]$ we define the operators
\begin{align*}
S_{k}^+(s) &:= \Lambda(\gamma) - \Lambda\left(\gamma_{k} + (\betaU-\alpha_k)\chi_{H_\tau(D_{k+1})} + s\chi_{F_\tau(D_{k+1})}\right), \\
S_{k}^-(t) &:= \Lambda\left(\gamma_{k} + (\betaL-\alpha_k)\chi_{H_\tau(D_{k+1})} + t\chi_{F_\tau(D_{k+1})}\right) - \Lambda(\gamma).
\end{align*}
Hence, in this situation, the reconstruction method is as follows:
\begin{enumerate}[(1)]
	\item Let $\gamma_k$ for some $k\in\{0,1,\dots,N\}$ be given (initially $\gamma_0 = c_0$ with $D_0 := \overline{\Omega}$).
	\item Determine $D_{k+1}$ via:
	\begin{equation*}
	D_{k+1} = \cap\{ C\in \mathcal{A}(D_{k}) \mid T_{k}^{\pm}(C) \geq 0 \}.
	\end{equation*}
	\item The sign of $c_{k+1}$ is determined via either of:
	\begin{equation*}
	c_{k+1} < 0 \Leftrightarrow S_{k}^+(0)\geq 0, \qquad c_{k+1}>0 \Leftrightarrow S_{k}^-(0) \geq 0.
	\end{equation*}
	\item Find $c_{k+1}$ via:
	\begin{equation*}
	c_{k+1} = \begin{cases}
	\min\{ s\in (0,\betaU-\alpha_{k}] \mid S_{k}^+(s)\geq 0\} & \text{if } c_{k+1} > 0, \\
	\max\{ t\in [\betaL-\alpha_{k},0) \mid S_{k}^-(t)\geq 0\} & \text{if } c_{k+1} < 0.
	\end{cases}
	\end{equation*} 
	\item The above steps determine $\gamma_{k+1}$. Repeat the steps iteratively, until we reach $\gamma_{N+1} = \gamma_N$ by finding $D_{N+1} = \emptyset$ in step~(2), hence concluding the reconstruction method.
\end{enumerate}
\begin{remark}
	Note that in step~(3) we may also use operators of the form in Remark~\ref{remark:m0} that involve $D\Lambda$. If there is the further simplification that all the constants $c_j$ have the same sign, then only one of the operators $T_k^{\pm}$ is needed for step~(2) and only one of the operators $S_k^\pm$ is needed for step~(4).
\end{remark}

\subsection*{Acknowledgments}

This work was supported by the Academy of Finland (decision 312124) and the Aalto Science Institute (AScI). HG thanks Nuutti~Hyv\"onen for encouraging discussions during a research visit at Aalto University.

\bibliographystyle{plain}
\bibliography{minbib}

\end{document}